\newtheorem{prop}{Proposition}
\begin{document}
	\begin{center}
		{\large  \bf Bilingualism beyond a transitional state and its societal stability}\\
		\vspace{2mm}
		{\small Chami Kariyapperuma}\index{Kariyapperuma,Chami },\\ {\small Kushani De Silva}\index{De Silva, Kushani}\\
		{\small Research \& Development Center for Mathematical Modeling, Department of Mathematics, University of Colombo, Sri Lanka}\\
		{\small 2018s17093@stu.cmb.ac.lk}\index{Kariyapperuma,Chami }, {\small kdesilva@maths.cmb.ac.lk}\index{De Silva, Kushani}\\[2mm]
	\end{center}

\begin{abstract}
For the longest time, languages have been competing for their speakers to survive, although this problem has only recently gained rigorous attention from the scholarly community as a means to address the risk of losing speakers for the majority of the world's languages. With its exhaustive literature on the topic,  bilingualism has been a mode of discussion to overcome the death of minority language groups. However, in this context of bilingualism, language mixing is based on the assumption of equal competency in any human encounter, enabling successful communication. In this work, we point out that the former assumption may not be reasonable and explore a new avenue of language mixing strategy to study its impact on the extinction of a threatened language.\\

Keywords: language competition, language death, language-mixing, competency
\end{abstract}

\section{Introduction} 
There are more than 7000 languages being spoken around the world as of today. Due to accelerated globalization with the dominance of a few languages, many languages are at risk of going extinct \cite{williams2002language}. This occurrence poses a larger threat to global, cultural, and linguistic diversity \cite{fishman2001can,boissonneault2021systematic}. To address this serious problem, different mathematical models using differential equations to explain language dynamics have emerged; some of the popular works include \cite{isern2014language,minett2008modelling,mira2005interlinguistic,kandler2010language,strogatz2003modelling}. These models mainly looked at, in different contexts, how favorable languages recruit people resulting fewer people in less favorable (minority) languages. Many of these models were birthed from Abrams and Strogatz language model (AS) that stressed the risk of language death of minority languages although its limitations were addressed in Isern-Fort model (IF) \cite{strogatz2003modelling,isern2014language}. These language models made an effort to showcase threatened languages such as Celtic, Gaelic, Welsh, and Quechua and discussed the possibilities of rebounding from extinction. However, recent work that used native American extinct languages such as Texas German showcased that IF model was not able to capture the actual dynamics of a language that has gone extinct, motivating the scholars to explore more avenues to understand the dynamics behind extinction \cite{thisanka2022can}.

With this constant attention to threatened (minority) languages, different models have paid attention to having a bilingual population generated by mixing two monolingual languages. In literature on two language societies, the bilinguals have been defined explicitly as a separate language group (e.g. \cite{minett2008modelling,mira2005interlinguistic,heinsalu2014role,vazquez2010agent}), as well as implicitly in two language population models where language group classification was done as speakers and non-speakers of a language (e.g. \cite{isern2014language,iriberri2012minority,patriarca2012modeling}). Recent work has showcased how bilingualism is on the rise in India defeating a well-popular high-status language, English \cite{de2020higher}. These evidences show that bilingualism can be on the rise in many parts of the regions of the world due to exponential growth in globalization. However, all these models where bilingualism is taken into account have assumed that bilinguals are equally competent and failed to explicitly discuss the capacity of communication. Their assumption is that any two persons who are knowledgeable in more than one language in a well-mixed society of two languages are able to communicate without any struggle. This may not be realistic because when a person is transitioning from one monolingual to another monolingual only that person will become bilingual. This is a continuous process when looked at from one person's side of the transition. Thus it is not realistic to think that any two persons who meet can communicate using both languages in full capacity as needed. Thus in this study, we focus on the competency level of a language mapped into the language competition model. Henceforth we introduce a novel communication approach built into the language model from which the dynamics of language shift be based on the ability to communicate rather than the knowledge of vocabulary. This ability will be based on a degree of the mutuality of the usage of a particular language between the two individuals. 

\section{Modeling background of Bilingualism}
Language competition models proposed in \cite{minett2008modelling,mira2005interlinguistic,heinsalu2014role,vazquez2010agent} have defined the bilingual group explicitly as a separate group in a two-language society. In this case, the bilingual group has been incorporated into the model predominantly in two ways. One is when the bilingual group acts merely as a bridge between the two monolingual groups, under the assumption that the attracting population of the bilingual group (individuals that influence speakers to move from one monolingual group to bilingual group) only consists of individuals from the other monolingual group, rather than bilinguals themselves (e.g. \cite{minett2008modelling}). The second instance is when the bilinguals are considered as a representative/agent of both languages, and is therefore included in the attracting population of both the monolingual groups and/or the bilingual group based on the assumptions of each model (e.g. \cite{mira2005interlinguistic,heinsalu2014role,vazquez2010agent}). Both of these instances suggest that the transition between language groups has been modeled under the impression that a monolingual is attracted towards the bilingual group mainly because of their attraction towards the other language (the other monolingual language group), rather than bilingualism itself. Here we discuss them in detail to build the need for the new communication concept. 

\subsection{Wang and Minnet, 2008}
This model examines bilingualism explicitly by introducing a third language group that speaks both of the languages X and Y. Then the society consists of three language groups X, Y, and Z (bilingual group); with respective population fractions given by $x,y$, and $z$, with $x+y+z=1$. Out of the six transitions between the language groups, two transitions $X \longrightarrow Y$ and $Y \longrightarrow X$ were excluded due to their rarity. 

This model embodies transitions of two sorts; vertical and horizontal. Vertical transmission implies the transference of language from parents to their children. Based upon a simplified version of this concept, the uni-parental vertical model (V-model) was adopted (see Table \ref{MW_table1}). A horizontal transmission model (H-model) was adopted that implies the transference between adults of language groups (see Table \ref{MW_table2}).

\begin{table}[hbt]
	\centering
	\caption{V-model of Wang and Minnet}
	\begin{tabular}{p{22mm}p{30mm}p{21mm}p{36mm}}
		\toprule
		State of \newline parent & parent$\rightarrow{}$offspring & State of \newline offspring & Transition probability \newline $P_{ij}$ for $i \to j$  \\
		\midrule
		Monolingual & $X\rightarrow{X}$ & Monolingual & 1\\
		{} & $Y\rightarrow{Y}$ & {} & 1 \\
		
		Bilingual & $Z\rightarrow{X}$ & Monolingual & $c_{ZX}s_Xx^a$\\
		{} & $Z\rightarrow{Y}$ & {} & $c_{ZY}s_Yy^a$\\
		
		Bilingual & $Z\rightarrow{Z}$ & Bilingual & $1- c_{ZX}s_Xx^a -c_{ZY}s_Yy^a$ \\
		\bottomrule
	\end{tabular}
	\label{MW_table1}
\end{table}
Here the transition probability for children of bilingual parents to become monolingual in X is taken as,
\begin{equation}
	\label{MW_eq1}
	P_{ZX}=c_{ZX}s_Xx^a,
\end{equation}
where $s_X$ is the status of language X (with $s_x+s_Y=1$), and $c_{ZX}$ (peak rate at which speakers of Y switch to speak X) , $a$ are constants, with $x^a$ being the attractiveness of X . Similar behavior can be seen for offspring of bilinguals becoming monolinguals in Y.

\begin{table}[hbt]
	\centering
	\caption{H-model of Wang and Minnet}
	\begin{tabular}{p{22mm}p{30mm}p{21mm}p{36mm}}
		\toprule
		State of adult (before) & parent$\rightarrow{}$offspring & State of adult (after) & Transition probability\newline $P_{ij}$ for $i \to j$  \\
		\midrule
		Bilingual & $Z\rightarrow{Z}$ & Bilingual & 1\\
		Monolingual & $X\rightarrow{Z}$& Bilingual & $c_{XZ}s_Yy^a$ \\
		{} & $Y\rightarrow{Z}$& {} & $c_{YZ}s_Xx^a$\\
		Monolingual & $X\rightarrow{X}$ & Monolingual  & $1-c_{XZ}s_Yy^a$\\
		{} & $Y\rightarrow{Y}$ & {} & $1-c_{YZ}s_Xx^a$\\
		\bottomrule
	\end{tabular}
	\label{MW_table2}
\end{table}
Here the transition probability for a monolingual in X to become Bilingual is taken as,
\begin{equation}
	\label{MW_eq1}
	P_{XZ}=c_{XZ}s_Yy^a,
\end{equation}
where $s_Y$ is the status of language Y (with $s_x+s_Y=1$), and $c_{XZ}$ (peak rate at which speakers of X switch to speak Z) , $a$ are constants, with $y^a$ being the attractiveness of Y. Here it is important to note that the factor that contributes to a monolingual in X to become bilingual is the attractiveness of Y (attracting population that only consist of speakers of Y) and the status of Y, rather than those of Z, implying that in this model bilingual group doesn't have an inherent status or doesn't act as attraction population of itself, but only a bridge between the two monolingual language groups. 

Ultimately a unified model was built by combining the V-model and H-model while defining a mortality rate, $\mu$ at which adults are replaced by children, to the model.

\begin{align}
		\frac{dx}{dt}&=\mu zP_{ZX}-(1-\mu)xP_{XZ} \notag \\
		\frac{dy}{dt}&=\mu zP_{ZY}-(1-\mu)yP_{YZ} \label{MW_eq2}
\end{align}

Moreover, by substituting $z=1-x-y$ and incorporating the attraction of different language groups corresponding to respective transmissions (Eq. \eqref{MW_eq1}), this model was further simplified, and it ultimately predicted that one of the two competing languages will eventually acquire all the speakers, regardless of the initial conditions, resulting in a monolingual system in which only one language is spoken.

Henceforth, Minnet and Wang went on to introduce an agent-based model for the above-mentioned phenomena, where an agent corresponds to the speakers of the population who can adopt either monolingual or bilingual states. In order to build this new model, the same formulae in the previous model (Eqs. \eqref{MW_eq1},\eqref{MW_eq2}) were re-interpreted in such a way that they specify the probabilities with which each agent makes the transition from state to state, instead of the rates of change of the proportions of individuals having certain states. Minnet and Wang used this model to claim that by intervention and appropriate changes to parameters, the language shift can be altered such that both languages persist.

\subsection{Mira and Paredes, 2005}
This model considers the three language groups X, Y, and B where X and Y are monolinguals and B is bilingual, with respective population fractions as x,y, and b, with $x+y+b=1$. This model is a generalized/extended version of AS model with the parameter of similarity between competing languages(k), where $k=0$ represents the instance where no communication is possible between monolingual speakers, which reflects the cases chosen in AS model. In this model, the rate of change of $x$ was given by,
\begin{equation} 
	\frac{dx}{dt}=yP_{YX} + bP_{BX} -x(P_{XY} +P_{XB})
\end{equation}
with analogous equations for $dy/dt$ and $db/dt$; where $P_{ij}$ is the transition probability between groups $i$ and $j$ per unit time, $i,j=X,Y,B$. The transition probability is given by,
\begin{align}
	P_{YB} &= cks_x(1-y)^a \notag \\
	P_{YX} &= c(1-k)s_X(1-y)^a  \label{mp_eq}  
\end{align}
where $c,a$ are constants. 
While the bilingual status is not included in the model due to the idea that bilinguals do not possess inherent characteristics, it is important to recognize that bilingual speakers serve as representatives of both languages and thus play a role in attracting others to become bilingual. As a result, bilinguals are part of the attracting population that impacts the transition from monolingual language group (X or Y) to bilingual group (B), as well as the transitions from X to Y and Y to X among monolingual groups.

In this model bilinguals also capture the dissimilarity of the two monolingual groups. In other words, the bilingual group is not a state of transition but a state that can be achieved by a monolingual group, i.e.
\begin{equation}
	\lim_{k \to 1} Y=X; \text{ by } Y\to B \text{ and } X \to B
\end{equation}
However, this work has failed to state a proper quantification of the concept similarity between monolingual groups. It can possibly be the similarity of vocabulary to the best of our understanding. Thus the bilingual state interpreted in this model reflects the mutuality of the vocabulary of two monolingual languages.

\subsection{Heinsalu et al., 2014}

This model modifies Wang and Minnet \cite{minett2008modelling} such that the rate of change of a monolingual of X(or Y) becoming a bilingual is proportional to the total number of speakers of language Y(or X) including bilinguals, i.e., to the sum $N_Y +N_Z$ (or $N_X +N_Z)$; where $N_X, N_Y$ and $N_Z$ denote the population fraction of respective language groups. They claimed that even if a language goes extinct within a monolingual group, it persists or even increases within the bilingual group.

\subsection{Vázquez et al., 2010}

This model has extended the Abrams-Strogatz model \cite{strogatz2003modelling} by including a third group of individuals who are bilingual and labeled as state Z. Monolingual users X and Y can become bilingual with a probability that depends on the number of their neighbors who are monolinguals of the opposite language, based on the idea that monolinguals are forced to become bilingual if they want to communicate with monolingual users of the opposite language. The model prohibits direct transitions from one class of monolingual to the other. Similarly, the transition from a bilingual Z to a monolingual X or Y depends on the number of neighbors using language X or Y, including bilingual agents.

\begin{align}
		& P(X \rightarrow Z)=(1-S) \sigma_y^a, \notag \\
		& P(Z \rightarrow Y)=(1-S)\left(1-\sigma_x\right)^a, \notag \\
		& P(Y \rightarrow Z)=S \sigma_x^a, \notag \\
		&P(Z \rightarrow X)=S\left(1-\sigma_y\right)^a, \label{vaz_eq}
\end{align}

where $\sigma_x$, $\sigma_y$, and $\sigma_z$ represent the densities of speakers in neighboring states X, Y, and Z, while $S$ represents the prestige of language X.

They had claimed that shift from language coexistence to dominance of one language occurs as the volatility parameter ($a$) reaches a critical value; with high volatility leading to coexistence, while low volatility leading to dominance or extinction of one language.

\section{New Model Set-up of Language Competition}
\label{ref:2.1_set up}
In this study, we have considered a society that has identified with two languages. We assumed the linguistic dissimilarity of the two languages $M_1$ and $M_2$ to be high so that two monolinguals, each belonging to $M_1$ and $M_2$ respectively, cannot have a conversation with each other.  Moreover, it is of importance to notice that, in actual social contexts, the transition of an individual between the two monolingual language groups is not immediate.  Instead, they would become bilingual once they pick up the second language, at least to a certain extent. When and if they eventually lose fluency in the first language only, they would become monolingual in the second language (see Fig. \ref{transition_diagram}). In that stage, the transition between $M_1$ and $M_2$ will be complete. Thus this well-mixed society with two languages will generate three distinct language groups -  $M_1$, $M_2$, and $B$ such that $m_1+m_2+b=1$ where lowercase letters represent the population fractions in each of the language groups, respectively. The definitions of the three language groups are given below:
\begin{table}[ht]
	\begin{tabular}{cl}
		Monolinguals & \begin{tabular}[c]{@{}p{10cm}@{}}Monolingual groups say $M_i$ ($i=1,2$) is defined such that an individual in group $M_i$ can communicate only using the language $M_i$ and nothing else.\end{tabular} \\
		Bilinguals   & \begin{tabular}[c]{@{}p{10cm}@{}}The bilingual group consists of people who can communicate using both $M_1$ and $M_2$.\end{tabular}                                                       
	\end{tabular}
\end{table}

\begin{figure}[th]
	\centering
	\includegraphics[width=100mm]{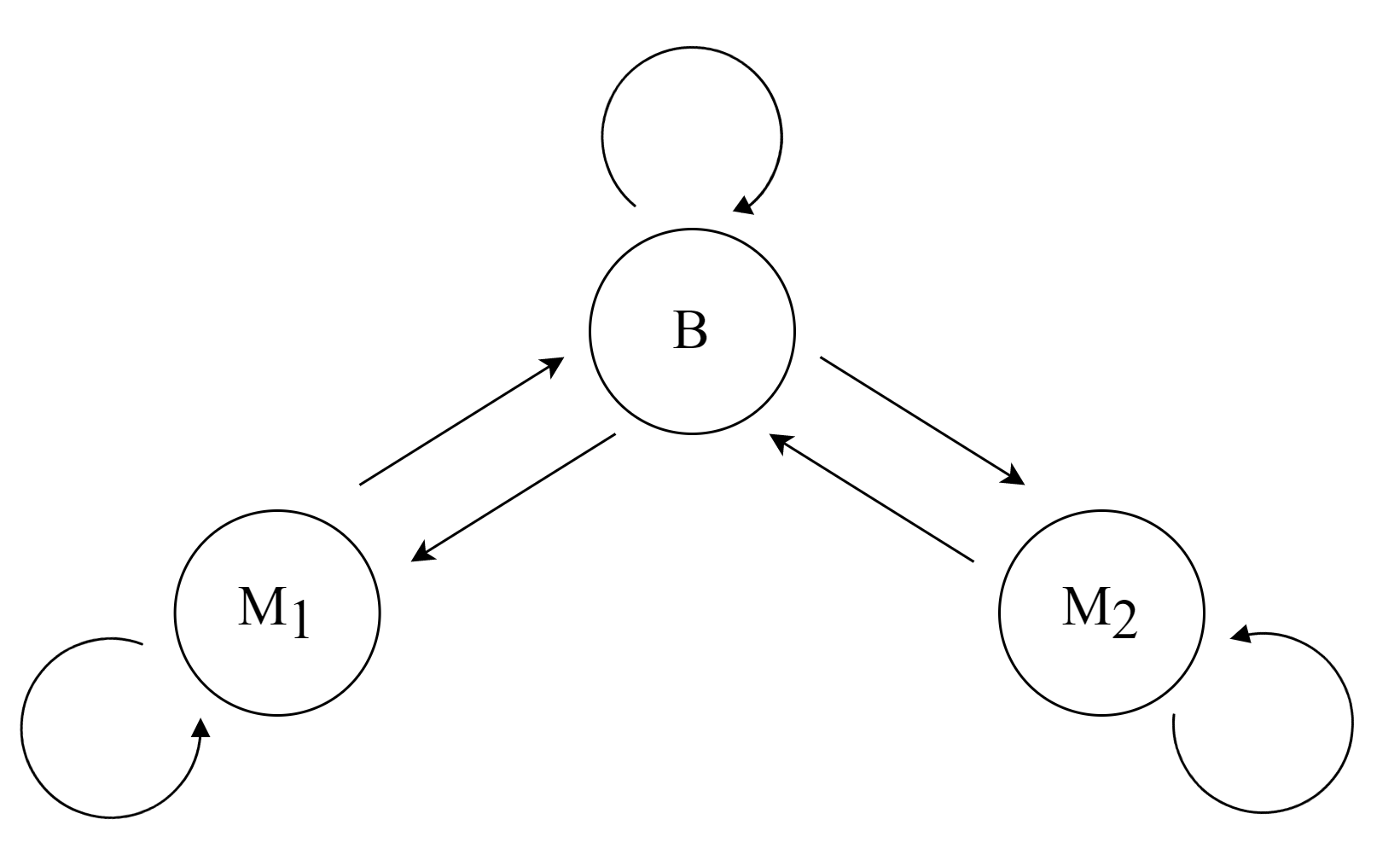}
	\caption{The language groups and possible transitions within a well-mixed bilingual society. Here Monolingual and bilingual groups are denoted respectively with $M_1$, $M_2$ and $B$.}
	\label{transition_diagram}
\end{figure}

\subsection{Novel classification of transition}
\label{sec:nct}
In accordance with the paradigm of the language groups given above, the transitions between them come down to the communication that would take place between any two individuals from two different groups. This ability to communicate is based upon the competency (proficiency) levels of a language, and these levels are usually assessed using four domains: speaking, listening, writing, and reading, all of which rely heavily on one's vocabulary knowledge. \cite{richards2002methodology}. Henceforth, we considered it the measurement of language competency (proficiency) in our study, with our assumption that individuals belonging to Monolingual groups (either $M_1$ or $M_2$) have 100\% vocabulary knowledge of that particular language. Therefore communication that takes place within $M_i$ for $i=1,2$ is not of concern with respect to the communication. However, the communication between $M_1$ and $M_2$ is of interest - because that gives birth to different levels of bilingualism. 

We mathematically denote an individual's competency level in language $j$ with $C_j$ where $0\leq C_j\leq 1$. A bilingual person who is 100\% competent in both languages is said to have achieved perfect bilingualism, i.e $C_{M_1}=C_{M_2}=1$. Other times it is $0< C_{M_1},C_{M_2}< 1$ when a bilingual person's competence is less than 100\% in at least one of the languages and is referred to as an imperfect bilingual. 

A bilingual person is typically characterized based on their level of language proficiency in each of the two languages. However, communication takes place neither based on one's vocabulary nor their similarity as in Wang and Minnet \cite{minett2008modelling} but based on the ability to communicate. In other words, simply knowing the vocabulary does not help someone in communicating because successful communication consists of  two individuals and a mutual set of vocabulary. This ability will be based on a degree of the mutuality of the vocabulary knowledge of a particular language by the two speakers. Thus we have proposed a novel classification approach to be applied in the process of classifying the population into three language groups; $M_1, M_2 \text{ and }B$. 


Bilingual's communication can be classified into the following instances;
\begin{enumerate}
	\item \label{NCC-ins1}When a bilingual is talking with a monolingual of $M_1$, the conversation is conducted only in $M_1$, depending on bilingual's competency level in $M_1$ ($C_{M_1}$) 
	\item \label{NCC-ins2}When a bilingual is talking with a monolingual of $M_2$, the conversation is conducted only in $M_2$, depending on the bilingual's competency level in $M_2$ ($C_{M_2}$) 
	\item \label{NCC-ins3}When a bilingual is talking with another bilingual, the conversation is conducted in accordance with the degree of mutuality of the competencies 
	(we denote the degree of mutuality using $x_{M_1},x_{M_2}$, see Eqs. \eqref{com1},\eqref{com2} for reference)
\end{enumerate}
In the above first and second instances, the only way the two individuals can have a conversation is to conduct it utilizing the bilingual's competency level of the monolingual language. But in the third instance where two bilinguals are involved, there are many ways they can utilize their language competencies to conduct a conversation.   
If the two bilinguals are perfect bilinguals, they use both languages ($M_1$ and $M_2$) interchangeably to communicate because the degree of mutuality is 100\%. If the conversation happens between a perfect bilingual and an imperfect bilingual, or two imperfect bilinguals, they can use both languages based on different mixing strategies, depending on different circumstances. Thus the communication takes place based on the degree of mutuality in vocabulary rather than the vocabulary of an individual. Based on these mixing strategies, the speakers will move through the language groups.

\subsection{The social status of a bilingual ($s_B$)}
\label{bilingual_status}
The social status of a language is assessed at junctures of prestige versus loyalty, urban versus rural social settings, and open versus closed societies while taking geographical characteristics, socioeconomic payoffs, and social and psycho-social mindsets into account \cite{heinsalu2014role}. On this basis, the parameter $s_i;$ $s_i\in[0,1]$ was defined as the status of the language $i$, measured in terms of socio-economic as well as cultural gains. In this model, we assume the status does not change over a long period of time, i.e.,
status is fixed for the time frame we look at. 

Let us denote the competency levels of two bilinguals, namely Person 1 ($P_1$) and Person 2 ($P_2$), in both languages in $C_{i,j}$ where $i$ is the person ($i=P_1, P_2$) and $j$ is the language where $j=M_1, M_2$ (see Table \ref{table:Comp level}). 
\begin{table}[hbt]
	\caption{Competency levels of two random bilinguals}
	\label{table:Comp level}
	\centering
	\begin{tabular}{lll}
		\toprule
		{} & \multicolumn{2}{c}{Language competency} \\
		\midrule
		Bilingual person & $M_1$ & $M_2$ \\
		\midrule
		$P_1$ & $C_{P_1M_1}$ & $C_{P_1M_2}$ \\
		$P_2$ & $C_{P_2M_1}$ & $C_{P_2M_2}$ \\
		\bottomrule
	\end{tabular}
	\label{table1}
\end{table}

Here, $C_{ij}$ denotes the competency of person $i$ in language $j$, where $i=P_1, P_2$ and $j=M_1, M_2$. The extent to which they could use the vocabulary knowledge of each language,  $M_1$ and $M_2$, to hold the conversation would be given by the following mutuality of vocabulary.\\ 
\begin{align}
	\label{com1}
	x_{M_1}=\text{min}(C_{P_1M_1},C_{P_2M_1})\\
	\label{com2}
	x_{M_2}=\text{min}(C_{P_1M_2},C_{P_2M_2})
\end{align}
The aforementioned scenario can be elaborated using an example as given in Table \ref{table2}. In this scenario where two persons ($P_1$ and $P_2$) have a conversation, they have the ability to use 60\% of $M_1$ and 50\% of $M_2$. \\
\begin{table}[hbt]
	\caption{Numerical example for the competency levels of two randomly selected bilinguals}
	\centering
	\begin{tabular}{lcc}
		\toprule
		{} & \multicolumn{2}{c}{Language competency}\\
		\midrule
		Bilingual person & $M_1$ & $M_2$ \\
		\midrule
		$P_1$  & 80\% & 50\% \\
		$P_2$  & 60\% & 70\% \\
		\bottomrule
	\end{tabular}
	\label{table2}
\end{table}
Thus we define the social status of a bilingual as a proportion of social status of monolingual languages. This proportion is determined by the degree of vocabulary one would use from each of the two monolingual languages (see Eq. ~\eqref{bi_equation}). 
\begin{equation}
	\label{bi_equation}
	s_B = s_{M_1}x_{M_1}+s_{M_2}x_{M_2}, \quad s_B \in (0,1]
\end{equation}
where, $x_{M_i}$ is the degree of mutuality in $M_i$ language and $s_{M_1}, s_{M_2} \in (0,1)$ respectively represent the social status of each language $M_1,M_2$. According to Eq.~\eqref{bi_equation}, a bilingual
person can become a perfect bilingual, i.e. $s_B = 1$ by having a competency of 100\% in both languages. By becoming a perfect bilingual, he/she can obtain a higher social status than either two monolingual groups. This movement eventually drives the individuals to become bilinguals and essentially avoid a lower status (either $M_1$ or $M_2$) going extinct. This has been proven through the model presented in this work (see Eq.~\eqref{ref:model_eq_2}).

\subsection{ODE Model System} 
\label{sec:gv1}
In this section, we propose a three-dimensional language model explaining the transitions between the language groups $M_1$, $M_2$ and $B$ as depicted in Fig. \ref{transition_diagram}. The ode model system that explains the dynamics of the three language groups is given below:\\
\begin{align}
		\dot m_1&=\,bP_{B,M_1}-m_1P_{M_1,B}, \notag \\
		\dot m_2&=\,bP_{B,M_2}-m_2P_{M_2,B}, \notag \\
		\dot b&=\,m_1P_{M_1,B}+m_2P_{M_2,B}-bP_{B,M_1}-bP_{B,M_2}, \label{ref:model_eq_1}
\end{align}
where $m_1$, $m_2$ and $b$ represent the population proportions of language groups $M_1$, $M_2$, and $B$ respectively, as described in detail in Section 3. The derivatives $\dot m_1$, $\dot m_2$, $\dot b$ are with respect to time and represent the change of population proportions $m_1$, $m_2$ and $b$ with time.
$P_{i,j}$ with $i, j = M_1,M_2, \text{ and }B$ represents the transition probabilities from group $i$ to $j$. We define the transition probabilities as follows:
\begin{equation}\label{trans_prob_func}
	P_{i,j} =\lambda s_j j^\alpha i^\beta,
\end{equation}
with $s_j \in (0,1]$ is the status of group $j$ and $\lambda>0$ is a scaling factor. The parameter $\alpha; \alpha \geq 1$ is the ease of attraction and that is contributing to the ``attracting population'' by means of $j^{\alpha}$ whereas $\beta;\beta\geq 1$ is the ease of survival which is contributing to the ``withdrawing population" one belongs to by means of $i^{\beta}$. Thus the full model in Eq.~\eqref{ref:model_eq_1} becomes,\\
\begin{align}		\label{ref:model_eq_2}
	\begin{cases}
		\vspace*{2mm}
		\dot m_1 &=\,\lambda s_{M_1} m_1^\alpha b^{\beta+1}-\lambda s_{M_2} b^\alpha m_1^{\beta+1}, \quad m_1(0)>0,\\ \vspace*{2mm}
		\dot m_2 &=\,\lambda s_{M_2} m_2^\alpha b^{\beta+1}-\lambda s_{M_1} b^\alpha m_2^{\beta+1}, \quad m_2(0)>0,\\ \vspace*{2mm}
		\dot b&=\,\lambda s_B b^\alpha m_1^{\beta+1}+\lambda s_B b^\alpha m_2^{\beta+1}-\lambda s_{M_2} m_1^\alpha b^{\beta+1}-\lambda s_{M_2} m_2^\alpha b^{\beta+1}, \quad b(0)>0.
	\end{cases}
\end{align}

This system suggests that the tendency of an individual to move through groups $i$ to $j$ happens along three axes: (a) social status acquired from group $j$ and benefits entitled thereupon (b) ease of attraction and (c) ease of survival in the existing group $i$. 

\section{Dynamical Analysis}
The system in Eq.~\eqref{ref:model_eq_2} has seven equilibria, three of which are trivial and boundary ($E_1$, $E_2$, $E_3$), and four of which are non-trivial ($E_4$, $E_5$, $E_6$, $E_7$) out of which $E_7$ is an interior equilibrium. The points are given below where $E_i$ shows the points in the order of $m_1^*,m_2^*,b^{*}$.
\begin{equation*}
	\begin{aligned}
		\label{eqpt_list}
		E_1 : &(1, 0 , 0),
		\\E_2 : &(0, 1 , 0),
		\\E_3 : &(0, 0 , 1),
		\\
		E_4 : &(m_1^*,1-m_1^*,0),
		\\
		E_5 : &\left(\frac{\left(\frac{s_{M_1}}{s_{B}}\right)^\delta}{1 + \left(\frac{s_{M_1}}{s_{B}}\right)^\delta}, 0, \frac{1}{1 + \left(\frac{s_{M_1}}{s_{B}}\right)^\delta}\right),
		\\
		E_6 : &\left(0 ,\frac{\left(\frac{s_{M_2}}{s_{B}}\right)^\delta}{1 + \left(\frac{s_{M_2}}{s_{B}}\right)^\delta}, \frac{1}{1 + \left(\frac{s_{M_2}}{s_{B}}\right)^\delta}\right),
		\\E_7 :  &\left(\frac{(\frac{s_{M_1}}{s_B})^\delta}{1 + (\frac{s_{M_1}}{s_B})^\delta + (\frac{s_{M_2}}{s_B})^\delta}, \frac{(\frac{s_{M_2}}{s_B})^\delta}{1 + (\frac{s_{M_1}}{s_B})^\delta + (\frac{s_{M_2}}{s_B})^\delta}, \frac{1}{1 + (\frac{s_{M_1}}{s_B})^\delta + (\frac{s_{M_2}}{s_B})^\delta}\right),
	\end{aligned}
\end{equation*}
where $\delta=1/(-\alpha+\beta+1)$. Stability analysis for the above equilibria was conducted using Jacobi stability analysis (see Appendix \ref{jacobi_stability}) in conjunction with phase portrait analysis, to get the full picture of the behavior of the system at each equilibrium. From the stability analysis, it was noticeable that $\alpha$ and $\beta$ play a major role in deciding the stability of the equilibria. For simplicity, we define a threshold $d$ to explain the stability for $\alpha-\beta$, a reflective functional trade-off between survival of one language and attraction of another. \\

In this analysis, we fix the status of each monolingual  at a reasonable value to indicate the relative social status; high-status or low-status. This is depicted in Table \ref{table:fix para}. We then changed the status of bilinguals in the numerical simulations as it plays a significant role in reflecting the degree of mutuality in either $M_1$ or $M_2$ (see Eq.~\eqref{bi_equation}).

\begin{table}[hbt]
	\caption{Parameter values in system \eqref{ref:model_eq_2} chosen for numerical simulations.}
	\centering
	\begin{tabular}{cc}
		\toprule
		Model parameter & Numerical value \\ \hline 
		$s_{M_1}$     &     0.3\\
		$s_{M_2}$     &     0.7\\
		$\lambda$   & 400 \\ \bottomrule
	\end{tabular}
	\label{table:fix para}
\end{table}

\subsection{Coexistence of all language groups ($E_7$)}
At the equilibrium where all languages coexist, stability is guaranteed if the following conditions are met according to trace of the Jacobian (see Appendix \ref{chap:appendixA2})., i.e. $Tr(J)<0$.\\
\begin{equation}
	\label{ineq1}
	\left(\frac{1}{-\delta}\right)\left(\frac{s _{M_1}}{s_B}\right)^{-\delta}\left(\frac{s_{M_2}}{s_B}\right)^{-\delta}
	\left(\frac{s_{M_{1}}^{\frac{\beta+1}{-\alpha+\beta+1}} }{ s_B^{\frac{\alpha}{-\alpha+\beta+1}} }+\frac{s_{M_{2}}^{\frac{\beta+1}{-\alpha+\beta+1}}}{s_B^{\frac{\alpha}{-\alpha+\beta+1}}}+\frac{s_{M_{1}}^{\frac{\beta}{-\alpha+\beta+1}}}{s_{B}^{\frac{\alpha-1}{-\alpha+\beta+1}}}+\frac{s_{M_{2}}^{\frac{\beta}{-\alpha+\beta+1}}}{s_{B}^{\frac{\alpha-1}{-\alpha+\beta+1}}}\right)<0
\end{equation}
\begin{align}
	\label{1neq1_1}
	&\left(\frac{s _{M_1}}{s_B}\right)^{-\delta} > 0 \\
	\label{1neq1_2}
	&\left(\frac{s_{M_2}}{s_B}\right)^{-\delta} > 0 \\  
	\label{1neq1_3}
	&\left(\frac{s_{M_{1}}^{\frac{\beta+1}{-\alpha+\beta+1}}}{s_B^{\frac{\alpha}{-\alpha+\beta+1}}}+\frac{s_{M_{2}}^{\frac{\beta+1}{-\alpha+\beta+1}}}{s_B^{\frac{\alpha}{-\alpha+\beta+1}}}+\frac{s_{M_{1}}^{\frac{\beta}{-\alpha+\beta+1}}}{s_{B}^{\frac{\alpha-1}{-\alpha+\beta+1}}}+\frac{s_{M_{2}}^{\frac{\beta}{-\alpha+\beta+1}}}{s_{B}^{\frac{\alpha-1}{-\alpha+\beta+1}}}\right) > 0
\end{align}

We numerically found that $E_7$ is globally stable for all ICs that are positive and for any value of $s_B$ given that the threshold $d$ is in the range of $(d\approx0.75\pm 0.15)$ i.e. $0.5\lessapprox d\lessapprox 0.9$. The uncertainty $0.15$ is produced by the range of $s_B$. This is depicted in Fig. \ref{fig: E7_1}. In that, any positive initial point in the space of $m_1-m_2$ converges to the $E_7$. Fig. \ref{fig: E7_1} additionally showcased the positions of other equilibria for full clarification. We changed the value of $s_B$ in the full spectrum of $\left( 0,1\right]  $ to observe the behavior of the equilibrium. The dynamic of the equilibrium with the change of $s_B$ is given in Fig. \ref{E7_diff} along with the population dynamics of the bilingual population. 
\begin{figure}[!h]
	\hfill
	\subfigure[]
	{\includegraphics[width=6.5cm]{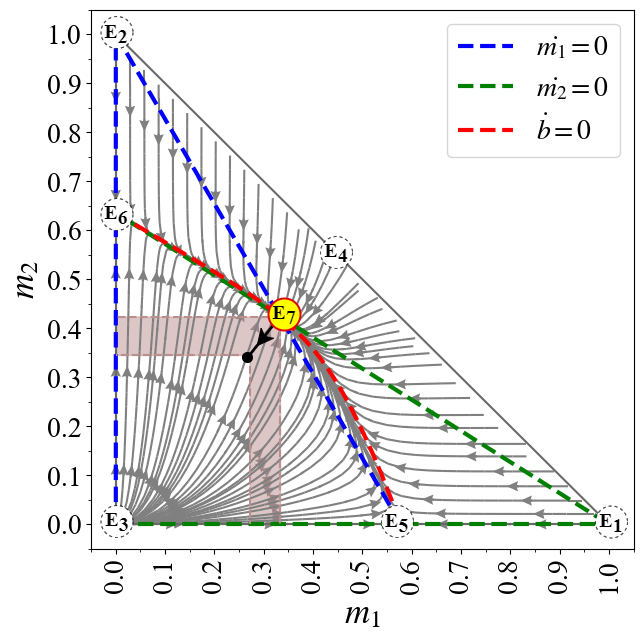}}
	\hfill
	\subfigure[]{\includegraphics[width=5.98cm]{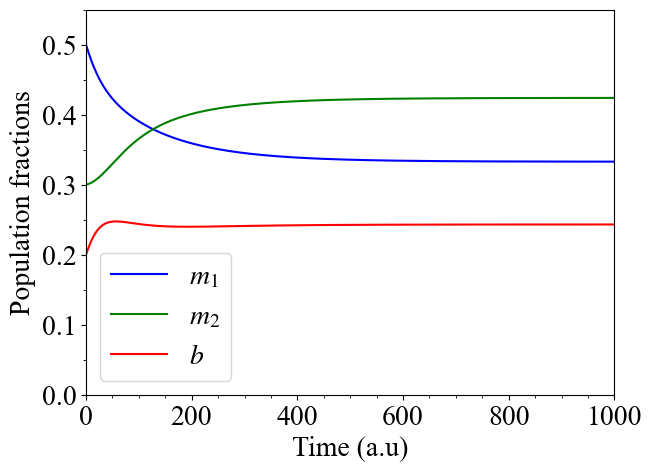}}
	\hfill
	\caption{Population dynamics of the system when $s_{M_1}=0.3, s_{M_2}=0.7, s_{B}=0.1$ is depicted. Here $\alpha-\beta=-2.5<1$ (where $\alpha=1.1$ and $\beta=3.6$) resulted in stable $E_7$. The left and right panels respectively show the phase portrait and the evolution of population fractions of the three groups (For this instance, ICs: $m_1=0.5, m_2=0.3$ and $b=0.2$).}
	\label{fig: E7_1}
\end{figure}
\begin{figure}[h!]
	\hfill
	\subfigure[]
	{\includegraphics[width=6cm]{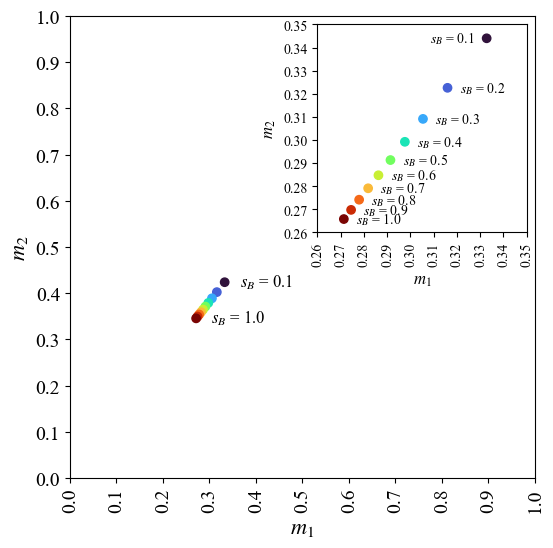}}
	\hfill
	\subfigure[]{\includegraphics[width=6.4cm]{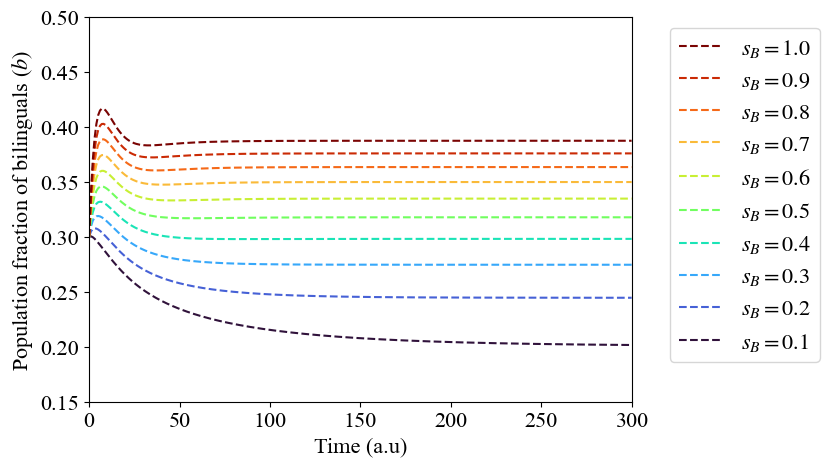}}
	\hfill
	\caption{(a) The movement of the equilibrium $E_7$ with $s_B$ is depicted. The point labeled $s_B=0.1$ corresponds to the phase portrait in Fig.\ref{fig: E7_1}. As $s_B$ goes from 0 to 1 both $m_1$ and $m_2$ decreases implying the increase of the population fraction of the bilinguals ($b$). (b) The time evolution of the bilingual population fraction corresponding to the movement of the equilibrium $E_7$ shown in Fig.\ref{E7_diff} is depicted. The trajectory labeled $s_B=0.1$ corresponds to the evolution of the population shown in the right panel of Fig.\ref{fig: E7_1}. As $s_B$ goes from 0 to 1, the population fraction value of the bilinguals ($b$) at the stable equilibrium $E_7$ increases.}
	\label{E7_diff}
\end{figure}

Overall, the system showed similar qualitative behavior for all bilingual statuses for the given parameter values. Moreover, this result is generalized as long as one language has lower status and the other monolingual has higher status ($s_{M_1}>s_{M_2}$ or vice versa). However, the range of the threshold can be changed for other possible values of the status of monolingual languages.

\subsection{Death of one language group}
In this section, we discuss the possibility of one language group going extinct: (a) lower status, (b) higher status, and (c) bilingual. These cases are established under equilibria $E_6, E_5$, and $E_4$ respectively. The stability conditions for these equilibria can be derived from Eq. \eqref{ineq1} in which the status value will be zero corresponding to zero population group (see Appendix \ref{stability;cond}).\\

The situation of lower status language disappearing occurs when the $\alpha -\beta$ is within the band of $(d,1)$ where $d$ is dependent of status of bilinguals. Recall that this $d$ was numerically found to be in ($0.5\lessapprox d\lessapprox 0.9$) such that $E_7$ obtains stability when $\alpha -\beta<d$. At this narrow band, the bifurcation takes place to the dynamics in Fig. \ref{fig: E7_1} and generate Fig. \ref{E2_sB_0.6} resulting convergence to $E_6$, for certain $s_B$ values. In particular, the stability of $E_6$ is attained when $s_{M_2}=s_B$ at $\left( \alpha-\beta\right) \in A=\left\lbrace d: \left( d,1\right) \right\rbrace $ as well as when $s_{M_1}<s_B<s_{M_2}$ or $s_{M_2}<s_B$, at $\left( \alpha-\beta\right) \in A=\left\lbrace d: \left( d,1\right) \smallsetminus d\in 1^-\right\rbrace$. On another note, as the status of $s_B$ increases, the $E_6$ moves down the $m_2$ axis. i.e. as more people are recruited to the bilingual group, the $m_2$ group loses its population fraction. However when $s_{M_2}<s_B$, $E_6$ converges to the equilibrium $E_3$ where both monolingual language groups disappear. \\
\begin{figure}[H]
	\hfill
	\subfigure[]{	\includegraphics[width=0.5\textwidth]{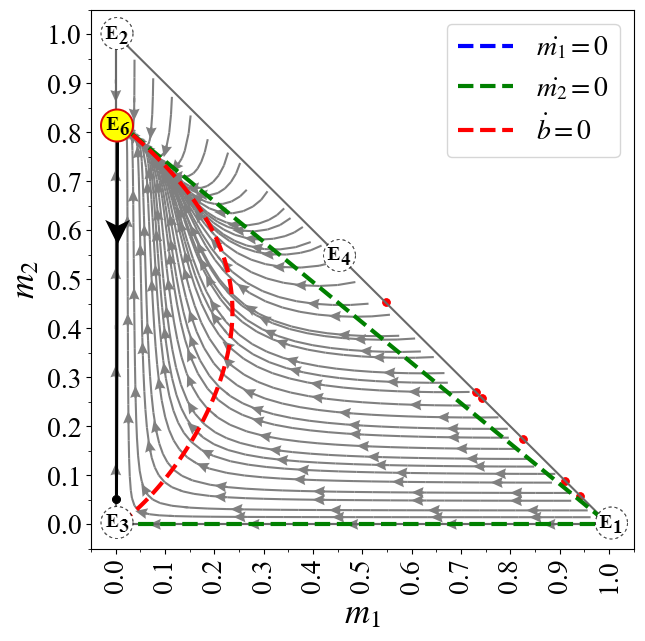}}
	\hfill
	\subfigure[]{\includegraphics[width=0.47\textwidth]{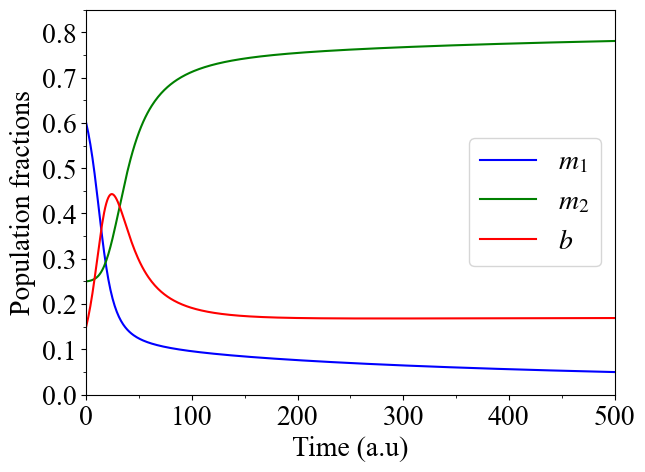}}
	\hfill
	\caption{The scenario where the lower status language disappears is depicted. Population dynamics of the system when $s_{M_1}=0.3, s_{M_2}=0.7, s_{B}=0.6$ and $\alpha-\beta=0.9$ (where $\alpha=2$ and $\beta=1.1$). The left panel shows the phase portrait and the right panel shows the population dynamics with ICs: $m_1=0.6, m_2=0.25$, and $b=0.15$.}
	\label{E2_sB_0.6}
\end{figure}

The stability conditions for the equilibrium point $E_5$ where only the higher status language going extinct (see Appendix \ref{stability;cond}) was found to be analogous to that of $E_6$ (see Appendix \ref{stability;cond}), with only $s_{M_2}$ being replaced by $s_{M_1}$. It could be observed that the lower status language group goes extinct at instances when its status is the lowest out of all three. But, since $s_{M_2}$ will never be the lowest out of the three statuses by definition, we can conclude that $E_5$ does not stably exist in this system.  \\

The disappearance of the bilingual group was observed for $s_B<s_{M_2}$ at certain $\alpha-\beta$ values. In particular, for bilingual status less than or equal to $s_{M_1}$, i.e., $s_{M_1} \geq s_B < s_{M_2}$, when $\left( \alpha-\beta\right)\in (d,1)$, dynamics of the system change such that the bilingual group collapses. This is depicted in Fig. \ref{E4_sB_0.1} for $\left( \alpha-\beta\right)=0.9 \in (d,1)$. The movement of the trajectories of the phase portrait shows that the system reaches different stable equilibria for different initial conditions, but satisfying $m_1^*+m_2^*=1$. The other instance where the bilingual group collapses occurs when $s_{M_1}< s_B < s_{M_2}$ but only for the limiting case of $\left(\alpha-\beta\right)$. In particular, when $s_{M_1}< s_B < s_{M_2}$ and $\alpha-\beta=0.9999$ (i.e. $\left( \alpha -\beta\right)  \to 1^{-1}$), the dynamics converge to $E_4$ where bilingual group disappears (Fig. \ref{E4_sB_0.5} portraits a stable $E_4$). The x,y space of phase portrait Fig. \ref{E4_sB_0.5} is divided into two domains by the nullcline $\dot b=0$ ($E_3E_4$) where the area above the nullcline represents $m_2>m_1$ and the area below the nullcline representing $m_2<m_1$. Here also the system reaches different stable equilibria for different initial conditions, but satisfying $m_1^*+m_2^*=1$ and $m_2>m_1$.

In summary, when the bilingual group goes extinct, the transition dynamics collapse because the only way the monolingual groups are connected is through bilingual group (see Fig. \ref{transition_diagram}).\\
\begin{figure}[H]   
	\centering
	\hfill
	\subfigure[]{\includegraphics[width=0.5\textwidth]{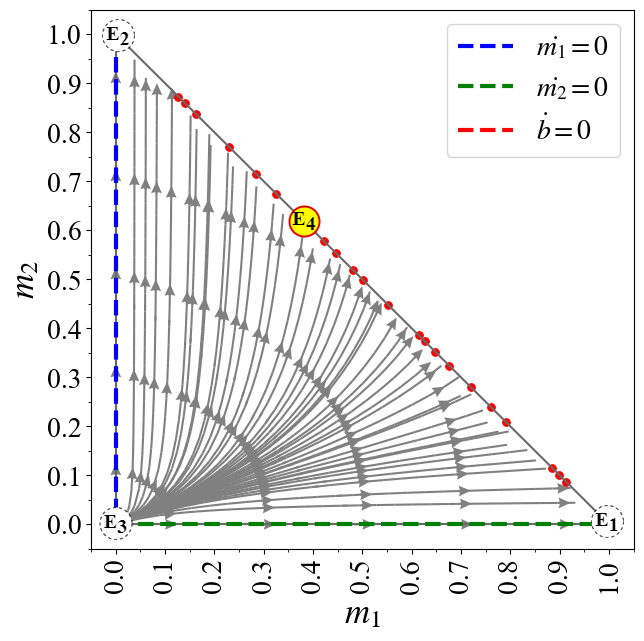}}         
	\hfill 
	\subfigure[]{\includegraphics[width=0.48\textwidth]{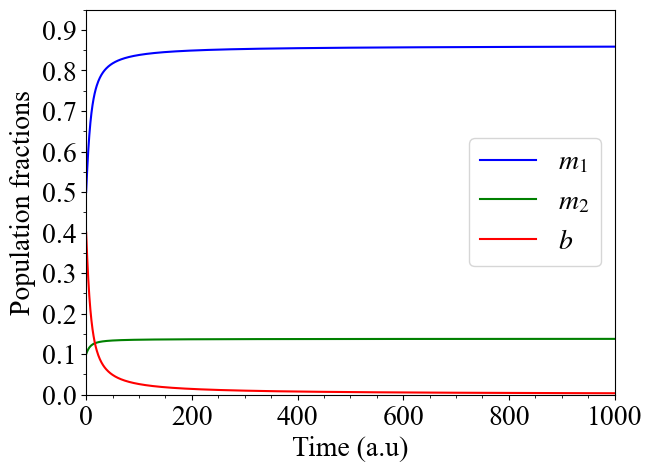}}
	\caption{(a): Phase portrait of the system when $s_{M_1}=0.3\geq s_{B}=0.1 < s_{M_2}=0.7$, and $\alpha-\beta=0.9$ (where $\alpha=2$ and $\beta=1.1$) where the points converge to $E_4$, i.e. bilingual disappear. The multiple steady states of $E_4$ are located along the diagonal of the x,y space, and only one steady state is labeled as $E_4$ to maintain the clarity of the diagram. (b): Dynamics of population fractions for the conditions given in (a) and ICs $m_1=0.5$, $m_2=0.1$, $b=0.4$.}
	\label{E4_sB_0.1}
\end{figure}

\begin{figure}[H]     
	\centering    
	\hfill
	\subfigure[]{\includegraphics[width=0.5\textwidth]{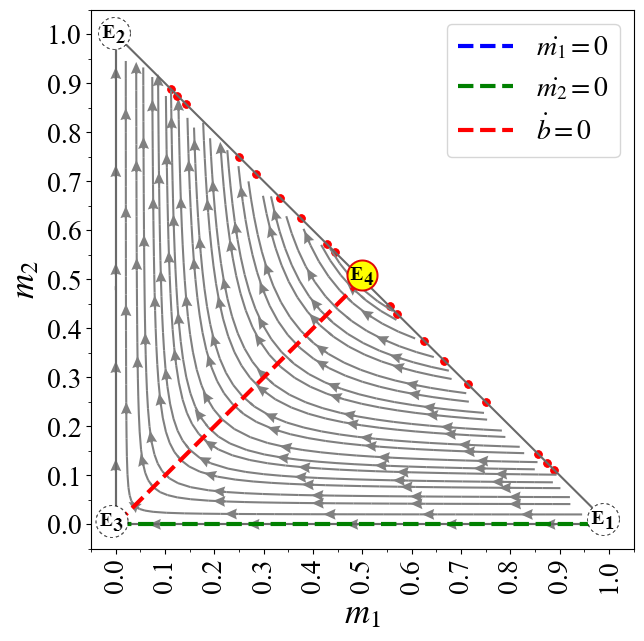}}   
	\hfill
	\subfigure[]{\includegraphics[width=0.48\textwidth]{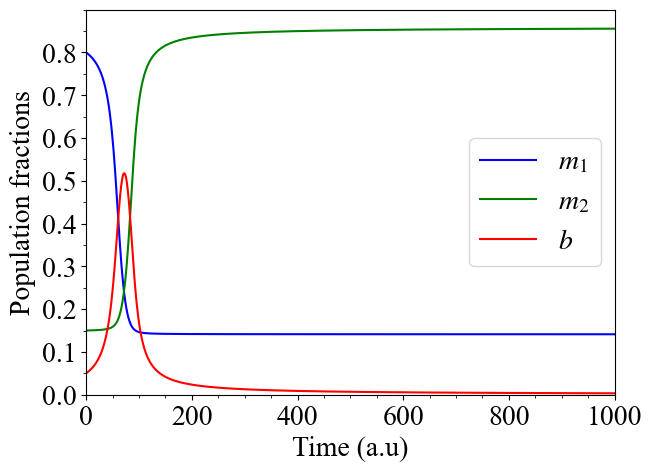}}
	\caption{(a): Phase portrait of the system when $s_{M_1}=0.3 < s_{B}=0.5 < s_{M_2}=0.7, $ and $\left( \alpha-\beta\right)  \to 1^{-}$ (where $\alpha=2.0999$ and $\beta=1.1$). This situation generate $E_4$ where bilingual group disappears. (b): The dynamics of population fractions under the same conditions of (a) and with ICs: $m_1=0.8$, $m_2=0.15$, $b=0.05$.}
	\label{E4_sB_0.5}
\end{figure}

With regard to first two equilibria $E_1,E_2$, only initial condition that would potentially end up reaching the equilibria $E_1$ and $E_2$ are the ones where $m_2=0$ and $m_1=0$ respectively. But the system is not defined at these points, therefore no population dynamics can be seen, and the stable existence of these equilibria does not occur in this system.\\
\subsection{Status dependent future of bilinguals}
Another situation of this society is where only the bilingual group exists. This is given by the equilibrium $E_3$ (see Fig. \ref{E7_sB_0.99}). This takes place only when ${s_B}>{s_{M_2}}$ and $\left( \alpha - \beta\right)  \to 1^{-}$ (in contrast to the case of bilingual going extinct when ${s_B}<{s_{M_2}}$ and $\left( \alpha - \beta\right)  \to 1^{-}$). Thus depending on the status of bilingual group against the status of the higher status language (given $\left( \alpha - \beta\right)  \to 1^{-}$) the future of the bilingual group as the only surviving group is determined. 
\begin{figure}[H]
	\hfill
	\subfigure[]{\includegraphics[width=0.46\textwidth]{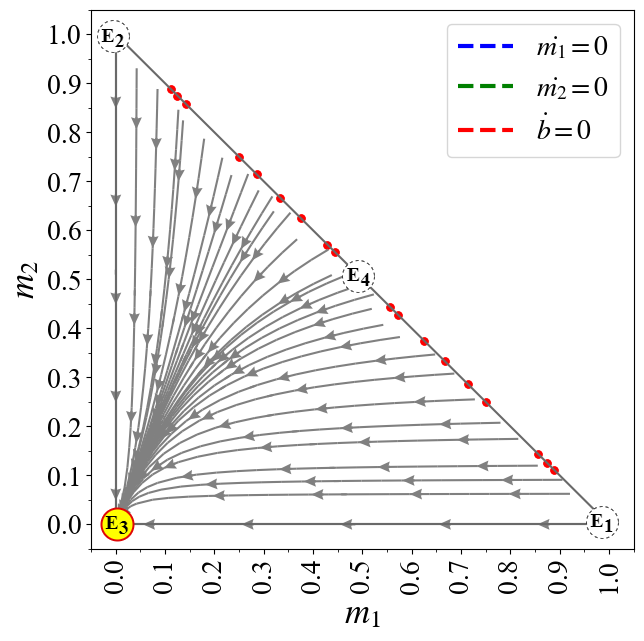}}
	\hfill
	\subfigure[]{\includegraphics[width=0.46\textwidth]{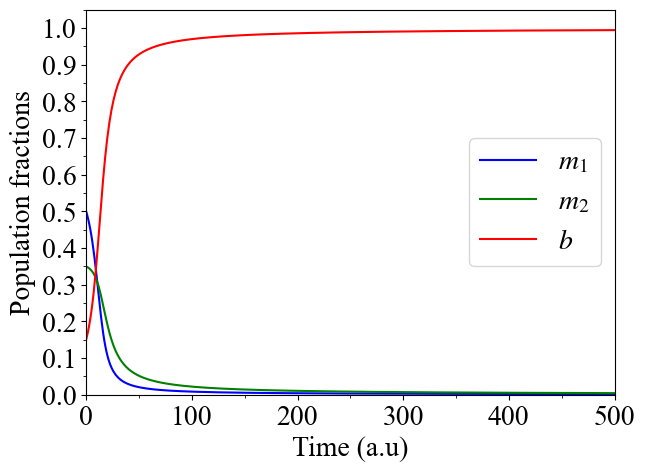}}
	\hfill
	\caption{Population dynamics when $s_{M_1}=0.3, s_{M_2}=0.7, s_{B}=0.9$ and $\alpha-\beta=0.9999$ (where $\alpha=2.0999$ and $\beta=1.1$). (b): instance when the two monolingual groups have higher initial population fractions than the bilingual group, but still eventually go extinct by losing their speaker to the bilingual group.}
	\label{E7_sB_0.99}
\end{figure}

\subsection{Local stability of $E_3$ and $E_4$}
\label{E7&E4}
The above scenarios were discussed for $\left( \alpha-\beta \right) <1$. However, when $\alpha-\beta>1$, the system shows the local stability simultaneously for $E_3$ and $E_4$, for any bilingual status ($s_B$) value. The respective phase portraits are depicted in Fig \ref{E7E4_sB_0.2_phase}.
\begin{figure}[H]
	\hfill
	\subfigure[]{\includegraphics[width=0.46\textwidth]{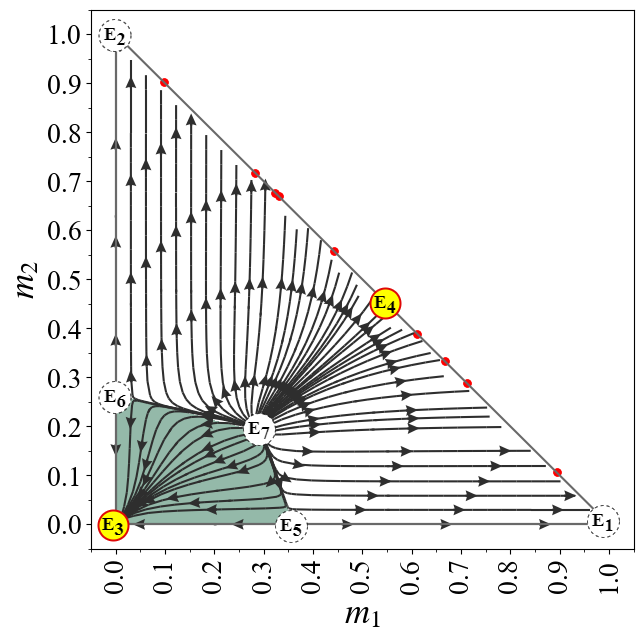}}
	\hfill
	\subfigure[]{\includegraphics[width=0.46\textwidth]{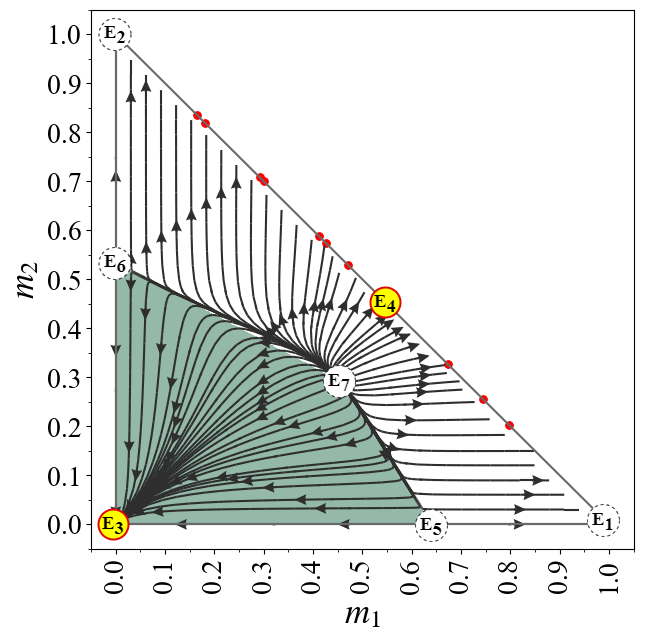}}
	\hfill
	\hfill
	\caption{(a) Population dynamics when $s_{M_1}=0.3, s_{M_2}=0.7, s_{B}=0.1$ and $\alpha-\beta=2.9$  (where $\alpha=4$ and $\beta=1.1$), (b) Population dynamics when $s_{M_1}=0.3, s_{M_2}=0.7, s_{B}=0.9$ and $\alpha-\beta=2.9$  (where $\alpha=4$ and $\beta=1.1$).}
	\label{E7E4_sB_0.2_phase}
\end{figure}

It was observed that the x,y space of phase portraits in Fig. \ref{E7E4_sB_0.2_phase} are separated into two domains by nullclines based on the direction of trajectories: domain (1)-the area below $E_6E_7E_5$ whose initial points reach $E_3$ steady state and domain (2)-the area over $E_6E_7E_5$ whose initial points reach $E_4$ steady states. It was also noticeable that with the increase in the bilingual status, the area of the attractor domain of $E_3$ increased.

\section{Conclusion}

Survival of a language becomes difficult when an attracting factor of another language becomes dominant. Mathematical models to study extinction risk and survival of minority languages have been on discussion recently. At the same time bilingualism is on the trend with the globalization and authors have looked at how bilinguals play a role in the extinction risk. \\

In this work we proposed a mathematical model \eqref{ref:model_eq_2} which represents the dynamics of language competition of a society with two monolingual groups and one bilingual group. In that we assumed the transition between monolingual groups are impossible unless through a bilingual group. Thus interaction between the two monolingual groups creates the bilinguals (see Fig. \ref{transition_diagram}). However in our model we assumed that the status of bilingual is a separate entity that is determined by its recruitments. In particular these recruitments are based on a degree of mutuality, i.e. two monolinguals can converse only based on their mutual set of vocabulary (Section \ref{bilingual_status} and therein). With the increment of the degree of mutuality, the status of the bilingual rises. With that definition of bilingual status, we define the transitional functions based on power functions of two parameters; the society's tendency to get attracted to new languages ($\alpha$) and the society's ability to survive using the language they already know ($\beta$).  With the equilibrium analysis of our model, we identified that these two parameters play a major role in the dynamical analysis of the model. Thus in the numerical study, we used the parameters, $\alpha$, $\beta$, and the bilingual status ($s_B$) as model variables. \\

Through the numerical study, it was found that the parameters $\alpha$ and $\beta$ impact the system as a pair, i.e., $\left( \alpha-\beta\right) $, which can be interpreted as functional trade-off between survival of one language and attraction of another. For the ease of grasping to the reader, the summary of all dynamics, we give here in conclusion of the dynamical analysis along the one dimensional axis of $\left(\alpha-\beta \right) \in \mathbb{R}$. Global stability of all three language groups is achieved when $\left(\alpha-\beta \right) <d$ such that $d$ was numerically found to be approximately in the range of $(d\approx0.75\pm 0.15)$ i.e. $0.5\lessapprox d\lessapprox 0.9$. The uncertainty range of $0.15$ is resulted by the spectrum of values of status of bilinguals, $s_B$ (see Figs. \ref{fig: E7_1},\ref{E7_diff}). In contrast to the above, when $\left( \alpha-\beta\right)>1$ , simultaneous local stability of $E_3$ (persistence of only the Bilingual group) and $E_4$ (persistence of only the two monolingual groups) is observed (see Fig. \ref{E7E4_sB_0.2_phase}). The parameter range $d<\left( \alpha-\beta\right) <1$ is when the system is undergoing bifurcation emerging new dynamics. Within this narrow band of $\alpha-\beta$ values, the system transitions causing the disappearance of certain equilibria and the system reaching stability at other equilibria in the process. Disappearance and emergence of the equilibria within this band of $\alpha-\beta$ values depend on the status of the bilinguals ($s_B$);
\begin{description}
	\item[$M_1\to 0$] Lower status language group disappears when
	\begin{itemize}
		\item $s_{M_2}=s_B$
		\item $s_{M_1}<s_B<s_{M_2}$ except when $\left( \alpha-\beta\right) \to 1^-$ (see Fig. \ref{E2_sB_0.6})
		\item $s_{M_2}<s_B$ except when $\left( \alpha-\beta\right) \to 1^-$
	\end{itemize}
	\item[$M_1,M_2 \to 0$] Both monolingual groups disappear
	\begin{itemize}
		\item $s_{M_2}<s_B$ only when $\left( \alpha-\beta\right) \to 1^-$ (see Fig. \ref{E7_sB_0.99})
	\end{itemize}
	\item[$B\to 0$] Bilingual group collapses
	\begin{itemize}
		\item $s_{M_1}\geq s_B<s_{M_2}$ (see Fig. \ref{E4_sB_0.1})
		\item $s_{M_1}<s_B<s_{M_2}$ only when $\left( \alpha-\beta\right) \to 1^-$ (see Fig. \ref{E4_sB_0.5})
	\end{itemize}
\end{description}

The above behavior patterns might imply that the societies whose aggregate behavior $\alpha-\beta$ lies between $d$ and $1$ can be considered to be at a threshold that is susceptible to a dramatic change in the face of a minor change in $\alpha-\beta$ (i.e., a change in the behavior patterns of the society), and reach either of the two behaviors, that is; the stable coexistence of all the language groups, or the simultaneous local stability of $E_3$ and $E_4$, that was found to show consistency throughout a wider range of $\alpha-\beta$ values. These results shed light on the significance of encouraging bilingualism in communities where a language with low status is in danger of extinction due to the dominance of a high-status language. Bilingualism can be promoted in such societies by emphasizing the fact that being a bilingual carries its own set of benefits, which could even be more advantageous than those of the high-status language group.\\


As a closing remark, this study shows that by incorporating a bilingual group into a society with two languages, it is possible to preserve the low-status language, either as a separate group of monolingual speakers of the low-status language, or within the bilingual group, for all the valid parameter values considered. This emphasizes the importance of promoting bilingualism in societies where a low-status language is threatened by a high-status language, and therefore faces the risk of going extinct. As future avenues, this concept of modeling can be tested on extinct languages to see if extinction is better explained for the already extinct languages. 


\appendix
\section{Appendix 1} 
\label{chap:appendixA1}

\subsection{Jacobi stability analysis}
\label{jacobi_stability}

The Jacobi stability analysis was conducted by computing the Jacobian matrix for the equilibria of the system (see Eq.~\eqref{ref:model_eq_1}), along with the corresponding eigenvalues, which were used to determine the stability of the fixed points. An equilibrium was considered asymptotically stable if all eigenvalues other than 0 have negative real parts, and unstable if at least one eigenvalue has a positive real part (see Proposition~\ref{prop1}).

\begin{prop}
	\label{prop1}
	Instances where one of the eigenvalues becomes 0, were noticeable in the stability analysis of the system depicted in Eq.~\eqref{ref:model_eq_1}. The eigenvalue that is equal to 0 is related to the condition (constraint) that 
	\begin{equation}
		\label{constraint}
		m_1+m_2+b=1
	\end{equation}
	Therefore the stability of the system is governed by eigenvalues other than 0.
\end{prop}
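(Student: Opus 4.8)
The plan is to read Proposition~\ref{prop1} as a statement about the redundancy built into the three-equation formulation \eqref{ref:model_eq_1}, and to trace the zero eigenvalue directly to conservation of the total population fraction. First I would add the three right-hand sides of \eqref{ref:model_eq_1} and observe that every transition term appears once with a plus sign and once with a minus sign, so that
\begin{equation*}
\dot m_1 + \dot m_2 + \dot b = 0
\end{equation*}
holds identically on the whole phase space. Hence $m_1+m_2+b$ is a first integral, the constraint \eqref{constraint} defines an invariant plane, and the simplex is preserved by the flow.

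Second, I would convert this conservation law into an algebraic statement about the Jacobian. Writing the system as $\dot x = F(x)$ with $x=(m_1,m_2,b)^{\mathsf T}$ and $\mathbf 1=(1,1,1)^{\mathsf T}$, the identity above reads $\mathbf 1^{\mathsf T} F(x)\equiv 0$. Differentiating in $x$ gives $\mathbf 1^{\mathsf T} J(x)=0$ at every point, and in particular at every equilibrium $E_i$. Thus $\mathbf 1^{\mathsf T}$ is a left eigenvector of $J$ with eigenvalue $0$, and since $J$ and $J^{\mathsf T}$ share a characteristic polynomial, $0$ is always an eigenvalue of $J$. This already explains why the zero eigenvalue appears at each fixed point.

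Third, and this is the conceptual heart, I would argue that this zero eigenvalue carries no dynamical information and so does not influence stability. The cleanest route is a change of coordinates $(m_1,m_2,b)\mapsto(m_1,m_2,w)$ with $w=m_1+m_2+b$. In these variables $\dot w\equiv 0$, so the Jacobian becomes block lower triangular with a vanishing bottom row; its spectrum is $\{0\}$ together with the two eigenvalues of the top-left $2\times2$ block. That block is exactly the Jacobian of the reduced planar system obtained by substituting $b=1-m_1-m_2$ into \eqref{ref:model_eq_1} and restricting to the invariant simplex. Because trajectories never leave the simplex, stability of $E_i$ is decided entirely within it, namely by these two nonzero eigenvalues.

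The step I expect to demand the most care is the last one: in general a zero eigenvalue signals a center manifold and requires nonlinear analysis before one may discard it. The point to establish rigorously is that here the zero eigenvalue is not a genuine center direction but an artifact of embedding an intrinsically two-dimensional system in $\mathbb{R}^3$; the invariance of the constraint surface from the first step removes it exactly, and the block-triangular reduction from the third step shows the surviving eigenvalues coincide with those of the honest two-dimensional dynamics on the simplex. Once this is in place, the asymptotic-stability criterion stated just before Proposition~\ref{prop1}, that all eigenvalues other than $0$ have negative real part, is fully justified.
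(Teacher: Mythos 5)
Your proof is correct, and it arrives at exactly the same conclusion as the paper --- that the spectrum of the full $3\times 3$ Jacobian is the spectrum of the reduced planar Jacobian together with one extra zero eigenvalue --- but by a genuinely different mechanism. The paper expands the characteristic polynomial of the $3\times 3$ Jacobian explicitly (Eq.~\eqref{char_eq_subs}), substitutes $h=-f-g$, and verifies by direct algebra that it factors as $p_{J_1}=\lambda\,p_{J_2}$ (Eqs.~\eqref{approach1},~\eqref{approach2},~\eqref{rel_char}), where $p_{J_2}$ is the characteristic polynomial of the system reduced via $b=1-m_1-m_2$. You instead observe that the conservation law $\mathbf 1^{\mathsf T}F\equiv 0$ forces $\mathbf 1^{\mathsf T}J=0$ pointwise, so $0$ is an eigenvalue with left eigenvector $\mathbf 1^{\mathsf T}$, and then make the factorization manifest by the linear change of variables $w=m_1+m_2+b$, under which the Jacobian becomes block triangular with the reduced $2\times 2$ Jacobian as the nontrivial block. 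Your route buys generality (it works verbatim for any system with a linear first integral, in any dimension) and explains \emph{why} the factorization holds rather than verifying it; the paper's route is more elementary and self-contained for this specific three-dimensional case. You also go one step further than the paper in explicitly addressing the point that a zero eigenvalue cannot ordinarily be discarded without a center-manifold argument, and in resolving it via the exact invariance of the simplex --- strictly, the equilibria are only asymptotically stable relative to the invariant level set of $m_1+m_2+b$, which is the physically meaningful statement; the paper leaves this implicit. One cosmetic remark: the matrix you obtain has a vanishing \emph{bottom row}, so it is block upper (not lower) triangular in the usual convention, but this does not affect the conclusion about its spectrum.
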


\begin{proof}
	This idea can be explained by comparing two approaches of defining the model and obtaining the eigenvalues. 
	
	\textit{Approach 1:} Presenting the model equations (see Eq.~\eqref{ref:model_eq_1}) as a three-dimensional system, with Eq.~\eqref{constraint} included as an additional constraint:
	\begin{align}
			\dot m_1 &= f(m_1,m_2,b) \notag \\
			\dot m_2 &= g(m_1,m_2,b) \notag \\
			\dot b &= h(m_1,m_2,b) \notag \\
			m_1&+m_2+b=1  \label{eig0_modeleq}
	\end{align}
	The Jacobian matrix for the above system (see Eq.~\eqref{eig0_modeleq}) is:
	
	\begin{equation}
		\label{jac_f,g,h_1}
		J_1=\left[\begin{matrix}f_{m_1} & f_{m_2} & f_{b}\\g_{m_1} & g_{m_2} & g_{b}\\h_{m_1} & h_{m_2} & h_{b}\end{matrix}\right]
	\end{equation}
	The eigenvalues of the above Jacobian matrix satisfies the following characteristic polynomial:
	
	\begin{equation}
		\label{char_eq}
		p_{J_1} = det(J_1-\lambda_1 I_n)
	\end{equation}
	where $\lambda_1$ represents eigenvalues associated with Eq.~\eqref{eig0_modeleq} (note that $\lambda_1=\lambda$; where $\lambda$ is the eigenvalue associated with Eq.~\eqref{ref:model_eq_1}, since both sets of equations represent the same system), and $I_n$ is the $n \times n$  Identity matrix.
	\begin{multline}
		\label{char_eq_subs}
		p_{J_1}= \lambda^{3} - \left(f_{m_1} + g_{m_2} + h_{b}\right) \lambda^{2}  \\ + \left(- f_{b} h_{m_1} + f_{m_1} g_{m_2} + f_{m_1} h_{b} - f_{m_2} g_{m_1} - g_{b} h_{m_2} + g_{m_2} h_{b}\right)\lambda \\-  f_{b} g_{m_1} h_{m_2} + f_{b} g_{m_2} h_{m_1} + f_{m_1} g_{b} h_{m_2}\\ - f_{m_1} g_{m_2} h_{b} - f_{m_2} g_{b} h_{m_1} + f_{m_2} g_{m_1} h_{b}  
	\end{multline}
	Also note that, when Eq.~\eqref{constraint} is differentiated with respect to time it gives;
	\begin{equation}
		\label{constraint_dif}
		\dot m_1+\dot m_2+\dot b=0
	\end{equation}
	By substituting Eq.~\eqref{constraint_dif} as $h=-f-g$ to Eq.~\eqref{char_eq_subs} and simplifying, it gives;
	\begin{equation}
		\label{approach1}
		p_{J_1}=  \lambda (\lambda^{2} - (f_{m_1}-f_{b}+g_{m_2}-g_b)\lambda+\left(f_{m_1}-f_{b}\right)\left(g_{m_2}-g_b\right)-\left(f_{m_2}-f_{b}\right)\left(g_{m_1}-g_{m_2})\right)
	\end{equation}
	
	\textit{Approach 2}: Presenting the model equations (see Eq.~\eqref{ref:model_eq_1}) as a two-dimensional system by substituting the condition in Eq.~\eqref{constraint} into the model equations:
	\begin{equation}
		\begin{aligned}
			\label{eig0_modeleq_linear}
			\dot m_1 &= f(m_1,m_2,1-m_1-m_2)\\
			\dot m_2 &= g(m_1,m_2,1-m_1-m_2)
		\end{aligned}
	\end{equation} 	
	The Jacobian matrix for the above set of equations is:
	\begin{equation}
		J_2=\left[\begin{matrix}
			f_{m_1}-f_b & f_{m_2}-f_b \\
			
			g_{m_1}-g_b & g_{m_2}-g_b
		\end{matrix}\right]
	\end{equation}
	where $b=1-m_1+m_2$	
	The eigenvalues of the above Jacobian matrix satisfies the following characteristic polynomial:
	
	\begin{equation}
		\label{char_eq2}
		p_{J_2} = det(J_2-\lambda_2 I_n)
	\end{equation}
	where $\lambda_2$ represents eigenvalues associated with the system depicted in Eq.~\eqref{eig0_modeleq_linear} (note that $\lambda_2=\lambda$; where $\lambda$ is the eigenvalue associated with Eq.~\eqref{ref:model_eq_1}, since both sets of equations represent the same system), and $I_n$ is the $n \times n$  Identity matrix.
	\begin{equation}
		\label{approach2}
		p_{J_2}=\lambda^2-\left(f_{m_1}-f_{b}+g_{m_2}-g_b\right) \lambda +\left(f_{m_1}-f_{b}\right)\left(g_{m_2}-g_b\right)-\left(f_{m_2}-f_{b}\right)\left(g_{m_1}-g_{m_2}\right)
	\end{equation}	
	When comparing Eq.~\eqref{approach1} and Eq.~\eqref{approach2}, it gives that;	
	\begin{equation}
		\label{rel_char}
		p_{J_1}=\lambda p_{J_2}
	\end{equation}
	The eigenvalues are obtained by setting the characteristic equation to 0. Then the equation Eq.~\eqref{rel_char} implies that the model equations presented as a three-dimensional system with an additional constraint $m_1+m_2+b=1$ (see Eq.~\eqref{eig0_modeleq}) has the same eigenvalues apart from $\lambda=0$, as the model equations presented as a two-dimensional system by incorporating the constraint $m_1+m_2+b=1$ into the model (see Eq.~\eqref{eig0_modeleq_linear}).
	
	Thus, it can be concluded that the eigenvalue equals to 0 (i.e. $\lambda=0$) corresponds to the constraint $m_1+m_2+b=1$, thus stability of the system (see Eq.~\eqref{ref:model_eq_1}) is determined by eigenvalues other than $0$.
	
\end{proof}

\subsection{Jacobian matrices}
\label{chap:appendixA2}


\begin{multline}
	J_7 =
	\left[\begin{array}{cc}
		\frac{\alpha s_{M_1} {m_1}^\alpha b^{\beta + 1}}{{m_1}} - \frac{s_{B} {m_1}^{\beta + 1} b^\alpha \left(b + 1\right)}{{m_1}} & 0 \\
		0 & \frac{\alpha s_{M_2} {m_2}^\alpha b^{\beta + 1}}{{m_2}} - \frac{s_{B} {m_2}^{\beta + 1} b^\alpha \left(b + 1\right)}{{m_2}}\\
		- \frac{\alpha s_{M_1} {m_1}^\alpha b^{\beta + 1}}{{m_1}} + \frac{s_{B} {m_1}^{\beta + 1} b^\alpha \left(b + 1\right)}{{m_1}} &
		- \frac{\alpha s_{M_2} {m_2}^\alpha b^{\beta + 1}}{{m_2}} + \frac{s_{B} {m_2}^{\beta + 1} b^\alpha \left(b + 1\right)}{{m_2}}
	\end{array}\right.
	\\
	\left.\begin{array}{c}
		- \frac{\alpha s_{B} {m_1}^{\beta + 1} b^\alpha}{b} + \frac{s_{M_1} {m_1}^\alpha b^{\beta + 1} \left(b + 1\right)}{b} \\
		- \frac{\alpha s_{B} {m_2}^{\beta + 1} b^\alpha}{b} + \frac{s_{M_2} {m_2}^\alpha b^{\beta + 1} \left(b + 1\right)}{b}
	\end{array}\right]
\end{multline}

\begin{multline}
	J_6 =
	\left[\begin{array}{cc}
		\frac{\alpha b^{\beta + 1} m_{2}^{\alpha} s_{M_2}}{m_{2}} - \frac{b^{\alpha} m_{2}^{\beta + 1} s_{B} \left(\beta + 1\right)}{m_{2}} \\
		- \frac{\alpha b^{\beta + 1} m_{2}^{\alpha} s_{M_2}}{m_{2}} + \frac{b^{\alpha} m_{2}^{\beta + 1} s_{B} \left(\beta + 1\right)}{m_{2}} 
	\end{array}\right.
	\\
	\left.\begin{array}{c}
		- \frac{\alpha b^{\alpha} m_{2}^{\beta + 1} s_{B}}{b} + \frac{b^{\beta + 1} m_{2}^{\alpha} s_{M_2} \left(\beta + 1\right)}{b}\\
		\frac{\alpha b^{\alpha} m_{1}^{\beta + 1} s_{B}}{b} + \frac{\alpha b^{\alpha} m_{2}^{\beta + 1} s_{B}}{b} - \frac{b^{\beta + 1} m_{1}^{\alpha} s_{M_1} \left(\beta + 1\right)}{b} - \frac{b^{\beta + 1} m_{2}^{\alpha} s_{M_2} \left(\beta + 1\right)}{b}
	\end{array}\right]
\end{multline}


\begin{multline}
	J_5 =
	\left[\begin{array}{cc}
		\frac{\alpha b^{\beta + 1} m_{1}^{\alpha} s_{M_1}}{m_{1}} - \frac{b^{\alpha} m_{1}^{\beta + 1} s_{B} \left(\beta + 1\right)}{m_{1}} \\
		- \frac{\alpha b^{\beta + 1} m_{1}^{\alpha} s_{M_1}}{m_{1}} + \frac{b^{\alpha} m_{1}^{\beta + 1} s_{B} \left(\beta + 1\right)}{m_{1}}  
	\end{array}\right.
	\\
	\left.\begin{array}{c}
		- \frac{\alpha b^{\alpha} m_{1}^{\beta + 1} s_{B}}{b} + \frac{b^{\beta + 1} m_{1}^{\alpha} s_{M_1} \left(\beta + 1\right)}{b}\\
		\frac{\alpha b^{\alpha} m_{1}^{\beta + 1} s_{B}}{b} + \frac{\alpha b^{\alpha} m_{2}^{\beta + 1} s_{B}}{b} - \frac{b^{\beta + 1} m_{1}^{\alpha} s_{M_1} \left(\beta + 1\right)}{b} - \frac{b^{\beta + 1} m_{2}^{\alpha} s_{M_2} \left(\beta + 1\right)}{b}
	\end{array}\right]
\end{multline}

\section{Appendix 2}
\label{stability;cond}
This Appendix section gives the conditions to which equilibria become stable.
\subsection{Stability conditions of $E_6$}

\begin{align}
	\left(\frac{1}{-\delta}\right)\left(\frac{s _{M_2}}{s_B}\right)^{-\delta}\left(\frac{s_{M_{2}}^{\frac{\beta+2}{-\alpha+\beta+1}}}{s_B^{\frac{\alpha+1}{-\alpha+\beta+1}}}+\frac{s_{M_{2}}^{\frac{\beta+1}{-\alpha+\beta+1}}}{s_{B}^{\frac{\alpha}{-\alpha+\beta+1}}}\right)<0 \notag \\
	\left(\frac{s_{M_2}}{s_B}\right)^{-\delta} > 0  \notag \\
	\left(\frac{s_{M_{2}}^{\frac{\beta+2}{-\alpha+\beta+1}}}{s_B^{\frac{\alpha+1}{-\alpha+\beta+1}}}+\frac{s_{M_{2}}^{\frac{\beta+1}{-\alpha+\beta+1}}}{s_{B}^{\frac{\alpha}{-\alpha+\beta+1}}}\right) > 0 	
\end{align}

\subsection{Stability conditions of $E_5$}

\begin{align}
	\left(\frac{1}{-\delta}\right)\left(\frac{s _{M_1}}{s_B}\right)^{-\delta}\left(\frac{s_{M_{1}}^{\frac{\beta+2}{-\alpha+\beta+1}}}{s_B^{\frac{\alpha+1}{-\alpha+\beta+1}}}+\frac{s_{M_{1}}^{\frac{\beta+1}{-\alpha+\beta+1}}}{s_{B}^{\frac{\alpha}{-\alpha+\beta+1}}}\right)<0 \notag \\
	\left(\frac{s_{M_1}}{s_B}\right)^{-\delta} > 0 \notag \\
	\left(\frac{s_{M_{1}}^{\frac{\beta+2}{-\alpha+\beta+1}}}{s_B^{\frac{\alpha+1}{-\alpha+\beta+1}}}+\frac{s_{M_{1}}^{\frac{\beta+1}{-\alpha+\beta+1}}}{s_{B}^{\frac{\alpha}{-\alpha+\beta+1}}}\right) > 0
\end{align}

\subsection{Stability conditions of $E_4$}

The Jacobian matrix of this equilibrium is;

\begin{equation}
	\label{JacMat6}
	J_4=\left[\begin{matrix}
		0 & 0 \\
		0 & 0
	\end{matrix}\right]
\end{equation}

The Jacobian matrix for this equilibrium is a zero matrix, with all its eigenvalues equal to zero. It indicates that the system does not change in any direction. The eigenvectors of the corresponding Jacobian matrix (see Eq.~\eqref{JacMat6}) are;

\begin{align}
	v_1=\left[\begin{array}{ll}
		0\\1
	\end{array}\right] \notag    \\
	v_2=\left[\begin{array}{ll}
		1\\0
	\end{array}\right]  
\end{align}

This can be considered an indication of an equilibrium, with sensitive dependence on initial conditions and causing the system to have multiple steady states. i.e., different initial states reach different stable equilibria but satisfy $m_1+m_2=1$.\\

\subsection{Stability conditions of $E_3$}
The equilibrium stably occurs when ${s_B}>{s_{M_1}}$ and $\alpha - \beta \rightarrow{1^{-}}$, making the following limits.

\begin{align}
	\lim_{\alpha-\beta \to 1^{-}} m_1^{*} = \lim_{\alpha-\beta \to 1^{-}}\left(\frac{(\frac{s_{M_1}}{s_B})^\delta}{1 + (\frac{s_{M_1}}{s_B})^\delta + (\frac{s_{M_2}}{s_B})^\delta}\right) = 0 \notag \\
	\lim_{\alpha-\beta \to 1^{-}} m_2^{*} = \lim_{\alpha-\beta \to 1^{-}}\left(\frac{(\frac{s_{M_2}}{s_B})^\delta}{1 + (\frac{s_{M_1}}{s_B})^\delta + (\frac{s_{M_2}}{s_B})^\delta}\right) = 0 \notag \\
	\lim_{\alpha-\beta \to 1^{-}} b^{*} = \lim_{\alpha-\beta \to 1^{-}}\left(\frac{1}{1 + (\frac{s_{M_1}}{s_B})^\delta + (\frac{s_{M_2}}{s_B})^\delta}\right) = 1
\end{align} 

\bibliographystyle{unsrt}
\bibliography{LD_preprint}

\end{document}